 \newtheorem{remark}{Remark}
 \newtheorem{theorem}[remark]{Theorem}
 \newtheorem{corollary}[remark]{Corollary}
\title{A note on the partition dimension of Cartesian product graphs}
\author{Ismael G. Yero and Juan A.
Rodr\'{\i}guez-Vel\'{a}zquez \\
\\
{\small Departament d'Enginyeria Inform\`{a}tica i Matem\`{a}tiques}\\
{\small Universitat Rovira i Virgili,  Av. Pa\"{\i}sos Catalans 26,
43007 Tarragona, Spain.} \\{\small ismael.gonzalez\@@urv.cat,
juanalberto.rodriguez\@@urv.cat} }
\begin{document}

\maketitle

\begin{abstract}
Let $G=(V,E)$ be a connected graph. The distance between two vertices $u,v\in V$,
denoted by $d(u, v)$, is the length of a shortest $u-v$ path in $G$. The distance between a
vertex $v\in V$ and a subset $P\subset V$ is defined as $min\{d(v, x): x \in P\}$, and
it is denoted by $d(v, P)$.
  An ordered partition $\{P_1,P_2, ...,P_t\}$ of vertices of a
graph $G$, is a \emph{resolving partition }of $G$, if all the distance
vectors $(d(v,P_1),d(v,P_2),...,d(v,P_t))$ are different. The \emph{partition dimension} of $G$, denoted by $pd(G)$, is the minimum number of
sets in any resolving partition of $G$. In this article we study the partition
dimension of Cartesian product graphs. More precisely, we show that for all pairs of connected graphs $G, H$,
$pd(G\times H)\le pd(G)+pd(H)$ and $pd(G\times H)\le pd(G)+dim(H),$  where $dim(H)$ denotes the metric dimension of $H$. Consequently, we show that $pd(G\times H)\le dim(G)+dim(H)+1.$
\end{abstract}

{\it Keywords:}  Resolving sets, resolving partition, partition
dimension, Cartesian product.

{\it AMS Subject Classification numbers:}   05C12; 05C70; 05C76
\newpage
\section{Introduction}

 The concepts of resolvability and location in graphs were described
independently by Harary and Melter \cite{harary} and Slater
\cite{leaves-trees}, to define the same structure in a
graph. After these papers were published several authors
developed diverse theoretical works about this topic
\cite{pelayo1,pelayo2,chappell,chartrand,chartrand1,chartrand2,fehr,landmarks}.
 Slater described the usefulness of these ideas into long range
aids to navigation \cite{leaves-trees}.
Also, these concepts  have some applications in chemistry for representing chemical compounds \cite{pharmacy1,pharmacy2} or to problems of
pattern recognition and image processing, some of which involve the use of hierarchical data structures \cite{Tomescu1}.
Other applications of this concept to
navigation of robots in networks and other areas appear in
\cite{chartrand,robots,landmarks}. Some variations on resolvability
or location have been appearing in the literature, like those about
conditional resolvability \cite{survey}, locating domination
\cite{haynes}, resolving domination \cite{brigham} and resolving
partitions \cite{chappell,chartrand2,fehr}.

Given a graph $G=(V,E)$ and an ordered set of vertices
$S=\{v_1,v_2,...,v_k\}$ of $G$, the {\it metric representation} of a
vertex $v\in V$ with respect to $S$ is the vector
$r(v|S)=(d(v,v_1),d(v,v_2),...,d(v,v_k))$, where $d(v,v_i)$, with
$1\leq i\leq k$, denotes the distance between the vertices $v$ and
$v_i$. We say that $S$ is a {\it resolving set} of $G$ if for every
pair of distinct vertices $u,v\in V$, $r(u|S)\ne r(v|S)$. The {\it metric
dimension}\footnote{Also called locating number.} of $G$ is the
minimum cardinality of any resolving set of $G$, and it is
denoted by $dim(G)$. The metric dimension of graphs is studied in
\cite{pelayo1,pelayo2,chappell,chartrand,chartrand1,tomescu}.

Given an ordered partition $\Pi =\{P_1,P_2, ...,P_t\}$ of the
vertices of $G$, the {\it partition representation} of a vertex
$v\in V$ with respect to the partition $\Pi$ is the vector
$r(v|\Pi)=(d(v,P_1),d(v,P_2),...,d(v,P_t))$, where $d(v,P_i)$, with
$1\leq i\leq t$, represents the distance between the vertex $v$ and
the set $P_i$, that is $d(v,P_i)=\min_{u\in P_i}\{d(v,u)\}$. We say
that $\Pi$ is a {\it resolving partition} of $G$ if for every pair
of distinct vertices $u,v\in V$, $r(u|\Pi)\ne r(v|\Pi)$. The {\it partition
dimension} of $G$ is the minimum number of sets in any resolving
partition of $G$ and it is denoted by $pd(G)$. The partition
dimension of graphs is studied in \cite{chappell,chartrand2,fehr,tomescu}.
It is natural to think that the partition dimension and metric dimension are related; in \cite{chartrand2} it was shown that for any
nontrivial connected graph $G$ we have
\begin{equation}\label{partition-dimension}pd(G)\le  dim(G) + 1.\end{equation}

The study of relationships between invariants of Cartesian product
graphs and invariants of its factors appears frequently in
research about graph theory. In the case of resolvability,  the relationships between the
metric dimension of the Cartesian product graphs and the metric
dimension of its factors was
studied in \cite{pelayo1,pelayo2}. An open problem on the dimension of Cartesian product graphs is to prove (or finding a counterexample) that for all pairs of graphs $G, H$;
$dim(G\times H)\le dim(G)+dim(H)$.
In the present paper we study the case of
resolving partition in Cartesian product graphs, by giving some
relationships between the partition dimension of  Cartesian
product graphs and the partition dimension of its factors. More precisely, we show that for all pairs of connected graphs $G, H$;
$pd(G\times H)\le pd(G)+pd(H)$ and $pd(G\times H)\le pd(G)+dim(H).$ Consequently, we show that $pd(G\times H)\le dim(G)+dim(H)+1.$

We recall that the Cartesian product of two graphs $G_1=(V_1,E_1)$ and
$G_2=(V_2,E_2)$ is the graph $G_1\times G_2=(V,E)$, such that
$V=\{(a,b)\;:\;a\in V_1,\;b\in V_2\}$ and two vertices $(a,b)\in V$
and $(c,d)\in V$ are adjacent in  $G_1\times G_2$ if and only if, either $a=c$ and $bd\in E_2$
or $b=d$ and $ac\in E_1$.

The following well known fact will be used several times.

\begin{remark}\label{distance-vertex-set}
Let the graph $G_i=(V_i,E_i)$ and let $S_i\subset V_i$, $i\in \{1,2\}$.  For every $(a,b)\in
V_1\times V_2$, it follows
$d_{G_1\times G_2}((a,b),S_1\times S_2)=d_{G_1}(a,S_1)+d_{G_2}(b,S_2).$
\end{remark}

\section{The partition dimension of Cartesian product graphs}

\begin{theorem}\label{sumapd}
For any connected graphs $G_1$ and $G_2$,
$$pd(G_1\times G_2)\leq pd(G_1)+pd(G_2).$$
\end{theorem}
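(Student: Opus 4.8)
The plan is to build a resolving partition of $G_1\times G_2$ by lifting resolving partitions of the two factors to the product and then ``merging'' them along a single $V_2$-slice. Write $r=pd(G_1)$ and $s=pd(G_2)$ and fix resolving partitions $\Pi_1=\{A_1,\dots,A_r\}$ of $G_1$ and $\Pi_2=\{B_1,\dots,B_s\}$ of $G_2$; if either factor has a single vertex the inequality is immediate, so we may assume $r,s\ge2$. I would take
\[
\Pi=\{A_i\times V_2 \,:\, 2\le i\le r\}\ \cup\ \{A_1\times B_j \,:\, 1\le j\le s\}.
\]
The first, routine, step is to check that $\Pi$ is an ordered partition of $V(G_1\times G_2)$ into $r+s-1$ nonempty classes: the classes $A_i\times V_2$ ($i\ge2$) are pairwise disjoint with union $(V_1\setminus A_1)\times V_2$, the classes $A_1\times B_j$ are pairwise disjoint with union $A_1\times V_2$, and each $A_i\times V_2$ with $i\ge2$ is disjoint from each $A_1\times B_j$ because $A_i\cap A_1=\emptyset$. (If one insists on exactly $r+s$ classes, split off one more nonempty piece; a refinement of a resolving partition is again resolving, so nothing is lost.) It then suffices to prove that $\Pi$ resolves $G_1\times G_2$.

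For the distance computations I would use Remark~\ref{distance-vertex-set}. For every $(a,b)\in V_1\times V_2$ it yields $d_{G_1\times G_2}((a,b),A_i\times V_2)=d_{G_1}(a,A_i)$ for $2\le i\le r$, since $d_{G_2}(b,V_2)=0$, and $d_{G_1\times G_2}((a,b),A_1\times B_j)=d_{G_1}(a,A_1)+d_{G_2}(b,B_j)$ for $1\le j\le s$. Hence $r((a,b)|\Pi)$ is completely determined by the values $d_{G_1}(a,A_i)$ with $i\ge2$ together with the values $d_{G_1}(a,A_1)+d_{G_2}(b,B_j)$ with $1\le j\le s$.

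To verify that $\Pi$ resolves, take distinct vertices $(a,b)$ and $(a',b')$. If $a=a'$ then $b\ne b'$, and since $\Pi_2$ resolves $G_2$ there is an index $j$ with $d_{G_2}(b,B_j)\ne d_{G_2}(b',B_j)$; adding the common value $d_{G_1}(a,A_1)$ to both sides preserves the inequality, so the $A_1\times B_j$ coordinate separates the two vertices. If $a\ne a'$, then since $\Pi_1$ resolves $G_1$ some class $A_k$ satisfies $d_{G_1}(a,A_k)\ne d_{G_1}(a',A_k)$; whenever such a $k$ can be chosen with $k\ge2$, the $A_k\times V_2$ coordinate finishes the job.

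The remaining situation, which I expect to be the main obstacle, is that $a\ne a'$ while $d_{G_1}(a,A_i)=d_{G_1}(a',A_i)$ for every $i\ge2$ and $d_{G_1}(a,A_1)\ne d_{G_1}(a',A_1)$. A short case analysis on which classes of $\Pi_1$ contain $a$ and $a'$ shows that then $a$ and $a'$ lie in a common class $A_p$ with $p\ge2$, so that $\alpha:=d_{G_1}(a,A_1)$ and $\alpha':=d_{G_1}(a',A_1)$ are both positive and unequal. In this case all $s$ ``slice'' classes $A_1\times B_j$ must be used: if $r((a,b)|\Pi)=r((a',b')|\Pi)$, then $d_{G_2}(b,B_j)-d_{G_2}(b',B_j)=\alpha'-\alpha$ for every $j$; evaluating this identity at the index of the $\Pi_2$-class containing $b$ gives $\alpha'-\alpha\le0$, and evaluating it at the index of the $\Pi_2$-class containing $b'$ gives $\alpha'-\alpha\ge0$, so $\alpha=\alpha'$, a contradiction. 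With every case settled, $\Pi$ is a resolving partition of $G_1\times G_2$, and therefore $pd(G_1\times G_2)\le r+s-1\le pd(G_1)+pd(G_2)$.
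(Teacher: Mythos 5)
Your proof is correct, and it takes a genuinely different route from the paper's. The paper partitions $V_1\times V_2$ into the ``cross'' pieces $A_1\times B_1,\dots,A_1\times B_t$, $A_2\times B_1,\dots,A_k\times B_1$ plus one leftover class $C=(V_1\times V_2)\setminus\bigl((V_1\times B_1)\cup(A_1\times V_2)\bigr)$, for a total of $k+t$ classes; you instead use the full slabs $A_i\times V_2$ ($i\ge 2$) together with the refinement $A_1\times B_1,\dots,A_1\times B_s$ of the slab $A_1\times V_2$, which is a bona fide partition into only $r+s-1$ classes. Both arguments reduce to the same distance formulas via Remark~\ref{distance-vertex-set}, and your handling of the delicate case (where $A_1$ is the only class of $\Pi_1$ separating $a$ from $a'$) --- evaluating the constant difference $d_{G_2}(b,B_j)-d_{G_2}(b',B_j)=\alpha'-\alpha$ at the classes containing $b$ and $b'$ to force $\alpha=\alpha'$ --- is the same telescoping trick the paper uses in its Case~1. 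What your construction buys is a strictly stronger conclusion: you have actually shown $pd(G_1\times G_2)\le pd(G_1)+pd(G_2)-1$, improving the stated theorem by one (your preliminary observations that the trivial-factor case is immediate, and that a refinement of a resolving partition is resolving, are both sound). It would be worth stating that sharper bound explicitly rather than weakening it to match the theorem in the last line.
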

\begin{proof}
Let $\Pi_1=\{A_1,A_2,...,A_k\}$ and $\Pi_2=\{B_1,B_2,...,B_t\}$ be
resolving partitions of $G_1=(V_1,E_1)$ and $G_2=(V_2,E_2)$ respectively. Let us show that
$\Pi=\{A_1\times B_1, A_1\times B_2,...,A_1\times B_t,A_2\times
B_1,A_3\times B_1,...,A_k\times B_1,C\}$, with $C=(V_1\times
V_2)-((V_1\times B_1)\cup (A_1\times V_2))$ is a resolving partition
of $G_1\times G_2$.

Let $(a,b)$, $(c,d)$ be two different vertices of $V_1\times V_2$. If $a=c$, then
there exists $B_i\in \Pi_2$ such that $d_{G_2}(b,B_i)\ne
d_{G_2}(d,B_i)$. Hence we have
\begin{align*}d_{G_1\times G_2}((a,b),A_1\times B_i)=&d_{G_1}(a,A_1)+d_{G_2}(b,B_i)\\
\ne & d_{G_1}(c,A_1)+d_{G_2}(d,B_i)\\
=& d_{G_1\times G_2}((c,d),A_1\times B_i)\end{align*}

Now, if $a\ne c$ then we have the following cases:

Case 1: Let $a\in A_i$ and $c\in A_j$, with $i\ne j$.  If we suppose,
$$d_{G_1\times G_2}((a,b),A_i\times B_1)= d_{G_1\times
G_2}((c,d),A_i\times B_1)$$
 and
$$d_{G_1\times G_2}((a,b),A_j\times
B_1)= d_{G_1\times G_2}((c,d),A_j\times B_1), $$
we obtain
\begin{align*}
d_{G_2}(b,B_1)=&d_{G_1\times G_2}((a,b),A_i\times B_1)\\
=& d_{G_1\times G_2}((c,d),A_i\times B_1)\\
=& d_{G_1}(c,A_i)+d_{G_2}(d,B_1)\\
=&  d_{G_1}(c,A_i)+d_{G_1\times G_2}((c,d),A_j\times B_1)\\
=&  d_{G_1}(c,A_i)+d_{G_1\times G_2}((a,b),A_j\times B_1) \\
=& d_{G_1}(c,A_i)+d_{G_1}(a,A_j)+d_{G_2}(b,B_1),
\end{align*}
a contradiction.

Case 2: If $a,c\in A_i$ then we have the following subcases.

Case 2.1: $b,d\in B_l$. Let $A_j\in \Pi_1$, such that
$d_{G_1}(a,A_j)\ne d_{G_1}(c,A_j)$. In this case, if $d_{G_2}(b,
B_1)=d_{G_2}(d,B_1)$ then we have
\begin{align*}d_{G_1\times G_2}((a,b),A_j\times B_1)=& d_{G_1}(a,A_j)+d_{G_2}(b,B_1)\\
                                                   \ne &d_{G_1}(c,A_j)+d_{G_2}(d,B_1)\\
                                                   =& d_{G_1\times G_2}((c,d),A_j\times B_1).
\end{align*}
On the contrary, if $d_{G_2}(b, B_1)\ne d_{G_2}(d,B_1)$ then we have
\begin{align*}d_{G_1\times G_2}((a,b),A_i\times B_1)=& d_{G_2}(b, B_1)\\
                                                      \ne & d_{G_2}(d,B_1) \\
                                                      =& d_{G_1\times G_2}((c,d),A_i\times B_1).
  \end{align*}

Case 2.2: $b\in B_j$ and $d\in B_l$, $j\ne l$. This case is analogous to Case 1.

Therefore for every different vertices $(a,b),(c,d)\in V_1\times V_2$, we have
$r((a,b)|\Pi)\ne r((c,d)|\Pi)$.
\end{proof}

By (\ref{partition-dimension}) we obtain the following direct consequence of Theorem \ref{sumapd}.
\begin{corollary}
For any connected graphs $G_1$ and $G_2$,
$$pd(G_1\times G_2)\leq pd(G_1)+dim(G_2)+1.$$
\end{corollary}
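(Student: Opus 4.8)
The plan is to obtain this corollary as an immediate consequence of Theorem~\ref{sumapd} together with the classical bound~(\ref{partition-dimension}); no new resolving partition has to be constructed. All of the combinatorial content — the explicit product partition $\{A_1\times B_1,\dots,A_1\times B_t,A_2\times B_1,\dots,A_k\times B_1,C\}$ and the case analysis showing that it resolves $G_1\times G_2$ — already resides in the proof of Theorem~\ref{sumapd}, so here it only remains to chain two inequalities.

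Concretely, I would proceed in two short steps. First, apply Theorem~\ref{sumapd} to the pair $G_1,G_2$ to get $pd(G_1\times G_2)\le pd(G_1)+pd(G_2)$. Second, since $G_2$ is a connected graph, inequality~(\ref{partition-dimension}) gives $pd(G_2)\le dim(G_2)+1$. Substituting the second bound into the first yields $pd(G_1\times G_2)\le pd(G_1)+dim(G_2)+1$, which is the claim.

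I do not expect any genuine obstacle for the corollary itself; the only point that warrants a moment's care is that~(\ref{partition-dimension}) is stated for nontrivial connected graphs, so one should either restrict to nontrivial factors (the natural setting for Cartesian products) or handle the degenerate case $G_2\cong K_1$ separately — but there $G_1\times G_2\cong G_1$ and $dim(K_1)=0$, so $pd(G_1\times G_2)=pd(G_1)\le pd(G_1)+0+1$ holds trivially. One may also remark that combining the companion bound $pd(G_1\times G_2)\le pd(G_1)+dim(G_2)$ (announced in the abstract) with~(\ref{partition-dimension}) applied to $G_1$ gives the related estimate $pd(G_1\times G_2)\le dim(G_1)+dim(G_2)+1$.
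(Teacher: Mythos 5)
Your proposal is correct and matches the paper's argument exactly: the corollary is obtained by chaining Theorem~\ref{sumapd} with the bound $pd(G_2)\le dim(G_2)+1$ from~(\ref{partition-dimension}). Your extra remark on the degenerate case $G_2\cong K_1$ is a harmless refinement the paper omits.
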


As we can see below, the above relationship can be improved.

\begin{theorem}\label{THpartDimension}
For any connected graphs $G_1$ and $G_2$, $$pd(G_1\times G_2)\le pd(G_1)+dim(G_2).$$
\end{theorem}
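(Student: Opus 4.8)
The plan is to imitate the construction used for Theorem~\ref{sumapd}, but to replace the resolving partition of $G_2$ by a resolving \emph{set}, which records exact distances rather than distances to classes and therefore allows us to save one part. Fix a resolving partition $\Pi_1=\{A_1,A_2,\dots,A_k\}$ of $G_1$ with $k=pd(G_1)$ and a resolving set $S=\{w_1,w_2,\dots,w_d\}$ of $G_2$ with $d=dim(G_2)$; if $G_2$ is trivial the claim is immediate, so we may assume $d\ge 1$. I would then consider the ordered partition
$$\Pi=\{A_1\times\{w_1\},A_2\times\{w_1\},\dots,A_k\times\{w_1\},\,A_1\times\{w_2\},A_1\times\{w_3\},\dots,A_1\times\{w_d\},\,C\},$$
where $C=(V_1\times V_2)\setminus\big((V_1\times\{w_1\})\cup(A_1\times S)\big)$ absorbs the leftover vertices. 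Disjointness is immediate because the listed classes have pairwise distinct second coordinates, so this is a partition of $V_1\times V_2$ into at most $k+d$ classes, and it suffices to show it is resolving; the class $C$ will only hold the remaining vertices and will never be used to separate a pair.

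By Remark~\ref{distance-vertex-set} we have $d_{G_1\times G_2}((a,b),A_i\times\{w_j\})=d_{G_1}(a,A_i)+d_{G_2}(b,w_j)$. First I would analyze the ``column over $w_1$'', i.e.\ the classes $A_1\times\{w_1\},\dots,A_k\times\{w_1\}$. Suppose two distinct vertices $(a,b),(a',b')$ are not separated by any of these. Then $d_{G_1}(a,A_i)-d_{G_1}(a',A_i)$ equals the constant $\delta:=d_{G_2}(b',w_1)-d_{G_2}(b,w_1)$ for every $i\in\{1,\dots,k\}$; choosing $i$ with $a\in A_i$ gives $\delta\le 0$ and choosing $i$ with $a'\in A_i$ gives $\delta\ge 0$, so $\delta=0$. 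Hence $a,a'$ lie in a common class of $\Pi_1$, $d_{G_2}(b,w_1)=d_{G_2}(b',w_1)$, and $d_{G_1}(a,A_i)=d_{G_1}(a',A_i)$ for all $i$; since $\Pi_1$ is a resolving partition of $G_1$ this forces $a=a'$, and therefore $b\ne b'$.

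It then remains to separate a pair $(a,b),(a,b')$ with $b\ne b'$ that the $w_1$-column fails to separate, for which we already know $d_{G_2}(b,w_1)=d_{G_2}(b',w_1)$. Since $S$ resolves $G_2$ and $b\ne b'$, there is an index $m$ with $d_{G_2}(b,w_m)\ne d_{G_2}(b',w_m)$, and necessarily $m\ne 1$, so $A_1\times\{w_m\}$ is one of the classes of $\Pi$; its distances to $(a,b)$ and $(a,b')$ are $d_{G_1}(a,A_1)+d_{G_2}(b,w_m)$ and $d_{G_1}(a,A_1)+d_{G_2}(b',w_m)$, which differ. This covers every pair of distinct vertices, so $\Pi$ is resolving and $pd(G_1\times G_2)\le|\Pi|\le k+d=pd(G_1)+dim(G_2)$.

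I expect the only delicate point to be the middle step: checking that failure of the $w_1$-column to separate a pair forces not merely $a,a'$ into the same class of $\Pi_1$ but in fact $a=a'$. This is exactly where the hypothesis that $\Pi_1$ \emph{resolves} $G_1$ (rather than being an arbitrary partition) is exploited, and it is what removes the extra $+1$ that appears if one instead feeds the partition $\{\{w_1\},\dots,\{w_d\},V_2\setminus S\}$ of $G_2$ into Theorem~\ref{sumapd}.
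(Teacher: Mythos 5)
Your proposal is correct and uses the same construction as the paper: the partition $\{A_1\times\{w_1\},\dots,A_k\times\{w_1\},A_1\times\{w_2\},\dots,A_1\times\{w_d\},C\}$, with the first-column classes resolving the $G_1$-coordinate and the $A_1\times\{w_m\}$ classes resolving the $G_2$-coordinate. The only difference is cosmetic: your single $\delta$-argument for the $w_1$-column packages what the paper splits into its Cases 1 and 2, and the verification is otherwise the same.
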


\begin{proof}

 Let $\Pi=\{A_1,A_2,...,A_k\}$ be a resolving
partition of $G_1=(V_1,E_1)$, let $S=\{u_1,u_2,...,u_{t}\}$ be a resolving set of
$G_2=(V_2,E_2)$ and let $C=V_1\times V_2- ((V_1\times\{u_1\})\cup (A_1\times\{u_2\})\cup \cdots \cup (A_1\times \{u_t\}))$. Let us show that
$\Pi_1=\{A_1\times \{u_1\},A_2\times \{u_1\},...,A_k\times
\{u_1\},A_1\times \{u_2\},A_1\times \{u_3\},...,A_1\times
\{u_{t}\},C\}$ is a resolving partition of $G_1\times G_2$.

Let $(a,b)$, $(c,d)$ be two different vertices of $V_1\times V_2$.
If $a=c$, then $b\ne d$. Thus, there exist $u_j\in S$ such that $d_{G_2}(b,u_j)\ne d_{G_2}(d,u_j)$. Hence,
 \begin{align*}d_{G_1\times G_2}((a,b),A_1\times \{u_j\})=&d_{G_1}(a,A_1)+d_{G_2}(b,u_j)\\
 \ne &d_{G_1}(c,A_1)+d_{G_2}(d,u_j)\\
 = &d_{G_1\times G_2}((c,d),A_1\times \{u_j\})
 \end{align*}
 Now, if $a\ne c$ we
have two cases:

Case 1: $a\in A_i$ and $c\in A_j$, $j\ne i$. Let us suppose, $d_{G_2}(b,u_1)\le d_{G_2}(d,u_1)$. In this case we have
\begin{align*}d_{G_1\times
G_2}((a,b),A_i\times\{u_1\})=&d_{G_2}(b,u_1)\\
\le & d_{G_2}(d,u_1)\\
<&d_{G_1}(c,A_i)+d_{G_2}(d,u_1)\\
=&d_{G_1\times
G_2}((c,d),A_i\times\{u_1\}).
\end{align*}
Analogously, if  $d_{G_2}(b,u_1)\ge d_{G_2}(d,u_1)$ we obtain $$d_{G_1\times
G_2}((a,b),A_j\times\{u_1\})>d_{G_1\times G_2}((c,d),A_j\times\{u_1\}).$$
Case 2: $a,c\in A_i$. Let us suppose $d_{G_2}(b,u_1)= d_{G_2}(d,u_1)$. Since there exists $j\ne i$, such that
$d_G(a,A_j)\ne d_G(c, A_j)$, we have
\begin{align*}d_{G_1\times
G_2}((a,b),A_j\times\{u_1\})=&d_{G_1}(a,A_j)+d_{G_2}(b,u_1)\\
\ne &d_{G_1}(c,A_j)+ d_{G_2}(d,u_1)\\
=&d_{G_1\times
G_2}((c,d),A_j\times\{u_1\}).
\end{align*}
If $d_{G_2}(b,u_1)\ne  d_{G_2}(d,u_1)$, we have
$d_{G_1\times
G_2}((a,b),A_i\times\{u_1\})=d_{G_2}(b,u_1)\ne d_{G_2}(d,u_1)=d_{G_1\times
G_2}((c,d),A_i\times\{u_1\}).
$
Therefore, for every different vertices
$(a,b)$, $(c,d)$ we have $r((a,b)|\Pi_1)\ne
r((c,d)|\Pi_1)$.
\end{proof}

In order to give some examples we emphasize the following well known values for the metric dimension of the complete graph, $K_n$, the path graph, $P_n$, the cycle graph, $C_n$, and the star graph, $K_{1,n}$.
\begin{remark}\label{particulares-dim}$\;$
\begin{itemize}
\item[{\rm (i)}] $dim(K_n)= n-1$ $(n\ge 2)$.
\item[{\rm (ii)}] $dim(P_n)= 1$.
\item[{\rm (iii)}]  $dim(C_n)= 2$.
\item[{\rm (iv)}]  $dim(K_{1,n})= n-1$ $(n\ge 2)$.
\end{itemize}
\end{remark}

We note that there are graphs for which Theorem \ref{sumapd}
estimates $pd(G_1\times G_2)$ better than Theorem
\ref{THpartDimension} and vice versa. For example Theorem
\ref{sumapd} leads to $pd(K_n\times P_n)\le n+2$ while Theorem
\ref{THpartDimension} gives $pd(K_n\times P_n)\le n+1$. On the
contrary, if $G$ denotes the unicyclic graph described below,
Theorem \ref{sumapd} leads to $pd(G\times G)\le 12$ while Theorem
\ref{THpartDimension} gives $pd(G\times G)\le 15$. In the above example the unicyclic graph $G$ is composed by fifteen  vertices, where the set ${1,2,3}$ form a triangle and the remaining twelve vertices are leaves: the leaves $4,5,6$ and $7$ are adjacent to $1$, the leaves $8,9,10$ and $11$ are adjacent to $2$, and the leaves $12,13,14$ and $15$ are adjacent to $3$.
In this case
$\prod=\{\{4,1,2,3\},\{8\},\{12\},\{5,9,13\},\{6,10,14\},\{7,11,15\}\}$ is
a resolving partition  and
$S=\{4,5,6,8,9,10,12,13,14\}$ is a
resolving set.

As a direct consequence of above theorem and (\ref{partition-dimension}) we deduce the following interesting result.

\begin{corollary}\label{sumadimen}
For any connected graphs $G_1$ and $G_2$, $$pd(G_1\times G_2)\le dim(G_1)+dim(G_2)+1.$$
\end{corollary}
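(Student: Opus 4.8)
The plan is to derive Corollary \ref{sumadimen} as an immediate consequence of Theorem \ref{THpartDimension} together with the known bound (\ref{partition-dimension}). The idea is simply that $pd(G_1)$ can be replaced, at the cost of $+1$, by $dim(G_1)$.

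First I would apply Theorem \ref{THpartDimension} to the pair $G_1, G_2$, which gives $pd(G_1\times G_2)\le pd(G_1)+dim(G_2)$. Next I would invoke inequality (\ref{partition-dimension}), which states that for any nontrivial connected graph $G_1$ we have $pd(G_1)\le dim(G_1)+1$. Substituting this bound for the term $pd(G_1)$ in the previous inequality yields
\begin{equation*}
pd(G_1\times G_2)\le pd(G_1)+dim(G_2)\le (dim(G_1)+1)+dim(G_2)=dim(G_1)+dim(G_2)+1,
\end{equation*}
which is exactly the claimed inequality.

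The only point requiring a moment of care is the hypothesis ``nontrivial connected'' under which (\ref{partition-dimension}) is stated: if one of the factors were the trivial graph $K_1$, then (\ref{partition-dimension}) need not apply directly, but in that degenerate case $G_1\times G_2\cong G_2$ (up to isomorphism) and the bound can be checked separately, or one simply restricts attention, as the rest of the paper implicitly does, to nontrivial connected factors. Since there is no genuine obstacle here — the argument is a one-line substitution — I would present it as a short remark rather than a displayed multi-step proof, essentially: ``Immediate from Theorem \ref{THpartDimension} and (\ref{partition-dimension}).'' The substantive content has already been carried out in proving Theorem \ref{THpartDimension}; this corollary merely records the cleanest symmetric-looking consequence.
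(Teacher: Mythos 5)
Your argument is exactly the paper's: the corollary is stated there as a direct consequence of Theorem \ref{THpartDimension} combined with the bound $pd(G_1)\le dim(G_1)+1$ from (\ref{partition-dimension}), which is precisely the one-line substitution you give. Your aside about the nontrivial-graph hypothesis is a reasonable extra precaution but not needed beyond the standing assumption of connected (hence, for the bounds to be meaningful, nontrivial) factors.
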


One example of graphs for which the equality holds in Corollary \ref{sumadimen} (and also in Corollary \ref{particulares} (ii))
are the graphs belonging to the family of grid graphs: $pd(P_r\times P_t)=3$.

By Remark \ref{particulares-dim} we obtain the following particular cases of Theorem \ref{THpartDimension}.

\begin{corollary}\label{particulares}For any connected graph $G$,
\begin{itemize}
\item[{\rm (i)}] $pd(G\times K_n)\le pd(G)+n-1$.
\item[{\rm (ii)}] $pd(G\times P_n)\le pd(G)+1$.
\item[{\rm (iii)}]  $pd(G\times C_n)\le pd(G)+2$.
\item[{\rm (iv)}]  $pd(G\times K_{1,n})\le pd(G)+n-1$.
\end{itemize}
\end{corollary}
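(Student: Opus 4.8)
The plan is to read off each of the four inequalities directly from Theorem~\ref{THpartDimension} by specializing the second factor. For item~(i) I would apply Theorem~\ref{THpartDimension} with $G_1=G$ and $G_2=K_n$ (both graphs are connected, so the hypothesis of the theorem is met), which gives $pd(G\times K_n)\le pd(G)+dim(K_n)$; substituting $dim(K_n)=n-1$ from Remark~\ref{particulares-dim}(i) yields the claim. Items~(ii), (iii) and~(iv) follow in exactly the same way, taking $G_2=P_n$, $G_2=C_n$ and $G_2=K_{1,n}$ respectively, and inserting the values $dim(P_n)=1$, $dim(C_n)=2$ and $dim(K_{1,n})=n-1$ recorded in Remark~\ref{particulares-dim}.

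There is essentially no obstacle here: the corollary is a pure substitution of known metric-dimension values into the bound of Theorem~\ref{THpartDimension}, so the whole argument is only a couple of lines long. The only point worth a passing remark is the implicit size restriction under which the quoted metric-dimension values hold (for instance $n\ge 2$ for $K_n$ and $K_{1,n}$, and $n\ge 3$ for $C_n$); in the few excluded small cases the asserted inequality is either vacuous or reduces to $pd(G)\le pd(G)+c$ with $c\ge 0$, so it holds trivially and the statement stands as written.

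If one wanted a slightly different presentation, one could instead combine Theorem~\ref{sumapd} with known values of the partition dimension of $K_n$, $P_n$, $C_n$ and $K_{1,n}$; however, since $pd(K_{1,n})=n>n-1=dim(K_{1,n})$ this route gives weaker constants in general, so specializing Theorem~\ref{THpartDimension} as above is the natural choice.
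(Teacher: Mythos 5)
Your proof is correct and is exactly the paper's argument: the corollary is obtained by specializing Theorem~\ref{THpartDimension} to $G_2\in\{K_n,P_n,C_n,K_{1,n}\}$ and substituting the metric dimension values from Remark~\ref{particulares-dim}. Your side remarks on small cases and on the weaker alternative via Theorem~\ref{sumapd} are sensible but not needed.
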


\section{Open problems}

\begin{enumerate}
\item To prove (or finding a counterexample) that for all pairs of graphs $G, H$;
$dim(G\times H)\le dim(G)+dim(H)$.
\item To provide lower bounds for $pd(G\times H)$.
\end{enumerate}

\section*{Acknowledgments}
This work was partly supported by the Spanish Ministry of Science
and Innovation through projects TSI2007-65406-C03-01 ``E-AEGIS" and
Consolider Ingenio 2010 CSD2007-0004 "ARES".


\begin{thebibliography}{99}

\bibitem{brigham} R. C. Brigham, G. Chartrand, R. D. Dutton, P.
Zhang, Resolving domination in graphs, {\it Mathematica Bohemica}
{\bf 128} (1) (2003) 25--36.

\bibitem{pelayo1} J. Caceres, C. Hernando, M. Mora, I. M. Pelayo, M. L. Puertas, C. Seara, D. R. Wood,
On the metric dimension of Cartesian product of graphs, {\it SIAM
Journal of Discrete Mathematics} {\bf 21} (2) (2007) 273--302.


\bibitem{pelayo2} J. Caceres, C. Hernando, M. Mora, I. M. Pelayo, M. L. Puertas, C. Seara, On the
metric dimension of some families of graphs, {\it Electronic Notes
in Discrete Mathematics} {\bf 22} (2005) 129--133.

\bibitem{chappell} G. Chappell, J. Gimbel, C. Hartman, Bounds on
the metric and partition dimensions of a graph, manuscript.

\bibitem{chartrand} G. Chartrand, L. Eroh, M. A. Johnson, O. R.
Oellermann, Resolvability in graphs and the metric dimension of a
graph, {\it Discrete Applied Mathematics} {\bf 105} (2000) 99--113.

\bibitem{chartrand1} G. Chartrand, C. Poisson, P. Zhang, Resolvability and the upper
dimension of graphs, {\it Computers and Mathematics with Applications} {\bf 39} (2000) 19--28.

\bibitem{chartrand2} G. Chartrand, E. Salehi, P. Zhang, The partition dimension of
a graph, {\it  Aequationes Mathematicae} (1-2) \textbf{59} (2000) 45--54.

\bibitem{fehr} M. Fehr, S. Gosselin, O. R. Oellermann, The
partition dimension of Cayley digraphs {\it Aequationes
Mathematicae} {\bf 71} (2006) 1--18.

\bibitem{harary} F. Harary, R. A. Melter, On the metric dimension of a graph, {\it Ars Combinatoria} {\bf 2} (1976)
191--195.

\bibitem{haynes} T. W. Haynes, M. Henning, J. Howard, Locating
and total dominating sets in trees, {\it Discrete Applied
Mathematics} {\bf 154} (2006) 1293--1300.

\bibitem{robots} B. L. Hulme, A. W. Shiver, P. J. Slater, A Boolean algebraic analysis of fire protection,
{\it Algebraic and Combinatorial Methods in Operations Research}
{\bf 95} (1984) 215--227.

\bibitem{pharmacy1} M. A. Johnson, Structure-activity maps for visualizing the graph variables arising in drug
design, {\it Journal of Biopharm. Statist} {\bf 3} (1993) 203--236.

\bibitem{pharmacy2} M. A. Johnson, Browsable structure-activity datasets, {\it Advances in Molecular
Similarity} (R. Carbó--Dorca and P. Mezey, eds.) JAI Press
Connecticut (1998) 153--170.

\bibitem{landmarks} S. Khuller, B. Raghavachari, A. Rosenfeld, Landmarks in
graphs, {\it Discrete Applied Mathematics} {\bf 70} (1996) 217--229.

\bibitem{Tomescu1} R. A. Melter,  I. Tomescu, Metric bases in digital geometry, \emph{Computer Vision  Graphics and Image Processing} \textbf{25 }(1984) 113--121.

\bibitem{survey} V. Saenpholphat, P. Zhang, Conditional resolvability in graphs: a
survey, {\it International Journal of Mathematics and Mathematical
Sciences} {\bf 38} (2004) 1997--2017.

\bibitem{leaves-trees} P. J. Slater, Leaves of trees, Proc. 6th Southeastern Conference on Combinatorics, Graph
Theory, and Computing, {\it Congressus Numerantium} {\bf 14} (1975)
549--559.

\bibitem{tomescu} I. Tomescu, Discrepancies between metric and
partition dimension of a connected graph, {\it Discrete Mathematics}
{\bf 308} (2008) 5026--5031.
\end{thebibliography}
\end{document}